\newtheorem{theorem}{Theorem}[section]
\newtheorem{corollary}[theorem]{Corollary}
\newtheorem{lemma}[theorem]{Lemma}
 \newtheorem{definition}{Definition}[section]
\newtheorem{example}{Example}[section]
\newtheorem{cond}{Condition}[section]
\newcommand{\halm}{\hspace*{\fill} $\Box$\par}
\newenvironment{proof}{\noindent {\bf
    Proof. }}{\halm\vspace{\baselineskip}} \newcommand{\nei}[1]{{\rm
    ne}(#1)}
\renewcommand{\bm}[1]{\boldsymbol{#1}} \newcommand{\bX}{{\bm X}}
\newcommand{\cX}{{\cal X}} \newcommand{\bx}{{\bm x}}
\newcommand{\bq}{{\bm q}} \newcommand{\bp}{{\bm p}}
 \newcommand{\R}{\mathbbm{R}}
\newcommand{\E}{\mbox{\rm E}}  \newcommand{\IF}{\mbox{\rm IF}}
\newcommand{\ie}{{\em i.e.\/}\xspace} \newcommand{\eg}{{\em
    e.g.\/}\xspace}
\newcommand{\thmref}[1]{Theorem~\ref{thm:#1}}
\newcommand{\secref}[1]{\S~\ref{sec:#1}}
\newcommand{\condref}[1]{\mbox{Condition~\ref{cond:#1}}}
\newcommand{\lemref}[1]{\mbox{Lemma~\ref{lem:#1}}}
\newcommand{\itref}[1]{\mbox{\ref{it:#1}}}
\newcommand{\appref}[1]{\mbox{Appendix~\ref{sec:#1}}}
\newcommand{\exref}[1]{\mbox{Example~\ref{ex:#1}}}
\def\T{{\footnotesize {^{_{\sf T}}}}} \newcommand{\Real}{{\rm
    I}\negthinspace {\rm R}}
\def\Bka{{\it Biometrika}}
\author{\small Philip Dawid$^{1}$, Monica Musio$^{2}$,
  and Laura Ventura$^3$\\[-1ex]
  \small$^1\,$University of Cambridge, UK\\[-.7ex]
  \small$^2\,$Universit\`a degli Studi di Cagliari, Italy\\[-1ex]
  \small$^3\,$Universit\`a degli Studi di Padova, Italy \\[-.7ex] }
\title{
  Minimum Scoring Rule Inference}
\begin{document}
\renewcommand{\thefootnote}{\arabic{footnote}}
\maketitle

\begin{abstract}
  Proper scoring rules are methods for encouraging honest assessment
  of probability distributions.  Just like likelihood, a proper
  scoring rule can be applied to supply an unbiased estimating
  equation for any statistical model, and the theory of such equations
  can be applied to understand the properties of the associated
  estimator.  In this paper we develop some basic scoring rule
  estimation theory, and explore robustness and interval estimation
  preoperties by means of theory and simulations.
\end{abstract}

\noindent {\em Keywords:} $B$-robustness; Bregman estimate; Composite
score; Godambe information; $M$-estimator; Pseudolikelihood; Tsallis
score; Unbiased estimating equation.

\section{Introduction}
\label{sec:intro}
Suppose we wish to fit a parametric statistical model $\{P_\theta:
\theta \in \Theta \subseteq \Real^p\}$, based on a random sample
$(x_1, \cdots,x_n)$ of size $n$.  The most popular tool for inference
on the parameter $\theta$ is the log-likelihood function, given by
\begin{equation}
  \label{eq:log-likelihood}
  \ell (\theta) = \sum_{i=1}^n \log{p_\theta(x_i)}
  \ ,
\end{equation}
where $p_\theta(x)$ is the density associated to $P_\theta$.  For
instance, the maximum likelihood estimator is defined as
$\widehat{\theta}= \arg\max_{\theta} \, \ell(\theta)$, and confidence
regions with nominal coverage $1-\alpha$ can be constructed as $\{
\theta: W(\theta) \leq \chi^2_{p;1-\alpha} \}$, where
$W(\theta)=2\{\ell(\widehat\theta)-\ell(\theta)\}$ is the likelihood
ratio statistic and $\chi^2_{p;1-\alpha}$ is the $(1-\alpha)$-quantile
of the $\chi^2_p$ distribution.

However, likelihood-based inference generally requires strict
adherence to the model assumptions, and can behave quite poorly under
slight model misspecification.  A possible solution is to resort to
suitable pseudo-likelihood functions, which are intended as surrogates
of the full likelihood.  Useful examples are given by composite
likelihoods (Cox and Reid, 2004, Varin {\em et al.\/}, 2011), when the
fully specified likelihood is computationally cumbersome or when a
fully specified model is out of reach, and by quasi-likelihoods, which
are derived from suitable unbiased estimating equations (see, among
others, McCullagh, 1991, Adimari and Ventura, 2002).

Both full and pseudo likelihood inference are special cases of a more
general estimation technique based on {\em proper scoring rules\/}
(see, \eg, Dawid and Musio, 2014), which are methods for encouraging
honest assessment of probability distributions.  In such a case, the
log-likelihood function is replaced by the function
\begin{equation}
  \label{eq:score}
  S(\theta)=\sum_{i=1}^n S(x_i,P_\theta)
  \ ,
\end{equation}
where $S( x, P)$ is a proper scoring rule, as described in
\secref{psr} below; this can be chosen to increase robustness, or for
ease of computation.  Minimising (\ref{eq:score}) will yield an
unbiased estimating equation, for any statistical model.

The appeal of scoring rules estimation lies in the potential adaption
of the scoring rule to the problem at hand, and it forms a special
case of $M$-estimation (see, \eg, Huber and Ronchetti, 2009).  In view
of this, under regularity conditions, asymptotic arguments indicate
that the estimator $\widehat\theta_S =\arg\min_\theta S(\theta)$ is
consistent and asymptotically normal, with asymptotic covariance
matrix given by the inverse of the Godambe information.  This allows
the construction of Wald type test statistics and confidence regions.
However, as is well known, Wald type statistics force confidence
regions to have an elliptical shape and may be less accurate for small
sample sizes.  On the other hand, the asymptotic distribution of the
likelihood ratio type statistics derived from (\ref{eq:score}) depart
from the familiar likelihood result, involving a linear combination of
independent chi-squared variates with coefficients given by the
eigenvalues of a matrix related to Godambe information.  As a
consequence, most routine statistical analyses employ Wald type
statistics.

The aim of this paper is to discuss inference based on proper scoring
rules.  Stemming from the failure of the information identity,
inference based on proper scoring rules requires suitable corrections.
In particular, when considering the scoring rule ratio statistic for a
parameter of interest, we discuss suitable adjustments that allow
reference to the usual asymptotic chi-square distribution.  Particular
focus is on robust proper scoring rules, {\em i.e.\/} scoring rules
that lead to estimators with bounded influence function.  Indeed, in
this case, the adjusted scoring rule ratio statistic can be used in
the usual way to derive confidence regions for a multidimensional
parameter of interest, while in general a quasi-likelihood does not
exist (McCullagh, 1991).

The paper is organized as follows.  In Section~2, background theory
and examples on proper scoring rules are given, while Section~3
focuses on proper scoring rule inference.  Section~4 discusses
asymptotic results on scoring rule procedures, and introduces the
adjustments of the scoring rule ratio statistic that allow reference
to the usual asymptotic chi-square distribution.  In Section~5
robustness properties of the scoring rules estimators are studied.  In
particular, conditions for robustness of the Bregman score are
investigated in detail.  Three examples dealing with confidence
regions from the adjusted scoring rule ratio statistic are analysed in
Section~6.  Simulation results indicate that such adjustments allow
accurate inferences, and it is argued that scoring rules have an
important role to play in frequentist inference.  Some concluding
remarks are given in Section~7.

\section{Proper scoring rules}
\label{sec:psr}

Let $X$ be a random variable taking values in a sample space ${\cX}$.
A {\em scoring rule\/} (see, \eg, Dawid, 1986) is a loss function
$S(x,Q)$ measuring the quality of a quoted probability distribution
$Q$ for $X$, in the light of the realised outcome $x$ of $X$.  It is
{\em proper\/} if, for any distribution $P$ for $X$, the expected
score $S(P,Q) :=\E_{X \sim P}\,S(X,Q)$ is minimised by quoting $Q =
P$.  Equivalently, the associated {\em divergence\/} or {\em
  discrepancy function\/} (Dawid, 1998), given by $D(P,Q) :=
S(P,Q)-S(P,P)$, is always non-negative.  There is a very wide variety
of proper scoring rules: for general characterisations see, among
others, McCarthy (1956), Savage (1971), and for various special cases
see Dawid (1998, 2007) and Gneiting and Raftery (2007).  We now
consider some of these in more detail.

Let $q(\cdot)$ denote the density of $Q$ with respect to an underlying
$\sigma$-finite measure $\mu$, or the probability mass function in the
discrete case.  Although greater generality is possible, in this paper
we will assume $\mu$ is Lebesgue measure for ${\cal X}$ a real
interval, and counting measure for ${\cal X}$ discrete.  For a finite
(especially binary) sample space ${\cal X}$, a useful proper scoring
rule is the {\em Brier\/} (Brier, 1950) or {\em quadratic\/} score
$S(x,Q) = \{1-q(x)\}^2 + \sum_{y \neq x} q(y)^2$, which is just the
squared Euclidean distance between the vector $\bq:= (q(y): y\in {\cal
  X})$ corresponding to $Q$, and the vector ${\bm\delta}_x$
corresponding similarly to the one-point distribution at $x$.  The
associated discrepancy $D(P,Q)$ is the squared Euclidean distance
between $\bp$ (the vector corresponding to $P$) and $\bq$.  Another
prominent proper scoring rule (Good, 1952) is the {\em log score\/}
$S(x, Q) = -\log q(x)$, whose associated discrepancy is the
Kullback-Leibler divergence $K(P,Q)$.  These are both special cases
(with, respectively, $\psi(t) \equiv t^2$ and $\psi(t) \equiv t \log
t$) of a general {\em separable Bregman score\/} construction (see
\eg\ Dawid, 2007, eq.~(16)):
\begin{equation}
  \label{eq:sepbreg}
  S(x,Q) = -\psi'\{q(x)\}  - \int \left[\psi\{ q(y)\} -  q(y) \psi'\{ q(y)\}\right]\,d\mu(y)
  \ ,
\end{equation}
where the {\em defining function\/} $\psi:\R^+\rightarrow \R$ is
convex and differentiable.  The associated {\em Bregman divergence\/}
is
\begin{equation}
  \label{eq:bregdiv}
  D(P,Q) = \int \Delta\left\{p(y),q(y)\right\}\,d\mu(y)
  \ ,
\end{equation}
where $\Delta(a,b) = \psi(a) - \psi(b) - \psi'(b)(a-b) \geq 0$ by
convexity.  Another important special case of this construction, the
{\em Tsallis score\/}, arises on taking $\psi(t) \equiv t^\gamma$
($\gamma>1)$.  This yields
\begin{equation}
  \label{eq:tsallisscore}
  S(x,Q) = (\gamma - 1) \int q(y)^\gamma\,d\mu(y) - \gamma q(x)^{\gamma-1}
  \ ,
\end{equation}
with divergence function
\begin{equation}
  \label{eq:tsallisdivergence}
  D(P,Q) = \int p(y)^\gamma\,d\mu(y) + (\gamma-1)\int q(y)^\gamma\,d\mu(y) - \gamma\int p(y) q(y)^{\gamma-1}\,d\mu(y).
\end{equation}
The {\em density power divergence\/} $d_\alpha$ of Basu {\em et al.\/}
(1998) is just \eqref{eq:tsallisdivergence}, with $\gamma = \alpha+1$
and $\mu$ given by Lebesgue measure, multiplied by $1/\alpha$.

In order to evaluate the log score, we only need to know the value of
the forecast density function, $q(\cdot)$, at the outcome $x$ of $X$
that Nature in fact produces.  So long as the size of ${\cal X}$
exceeds two, the log score is essentially the only proper scoring rule
that is {\em strictly local\/} in the above sense (Bernardo, 1979).
However, we can weaken the locality requirement, and so admit further
``local proper scoring rules''.  For a sample space ${\cal X}$ that is
an open subset of a Euclidean space, we ask that $S(x, Q)$ should
depend on the density function $q(\cdot)$ only through its value and
the value of a finite number of its derivatives at $x$.  For the case
that ${\cal X}$ is a real interval, Parry {\em et al.\/}~(2012) show
that any such local proper scoring rule is a linear combination of the
log score and what they term a {\em key local\/} scoring rule, which
they have characterised.  A key local scoring rule has the convenient
property that it can be computed without knowledge of the
normalisation constant of the density.  The simplest key local scoring
rule is that based on the proposal by Hyv\"arinen (2005),
\begin{equation}
  \label{eq:hyvscore}
  S_H(x,Q)=2\Delta\ln q(x)+ \left|\nabla\ln q(x)\right|^{2}
  \ ,
\end{equation}
where, in the case of a real sample space, $\nabla:=
(\partial/\partial x)$ and $\Delta :=
\partial^2/(\partial x)^2$.  Formula (\ref{eq:hyvscore}) can also be
applied to the case of a multivariate observation $X = (X_1, \ldots,
X_k)$, with $\nabla:= (\partial/\partial x_j)$ and $\Delta :=
\sum_{j=1}^k
\partial^2/(\partial x_j)^2$.  Further extensions to a general
Riemannian sample space are possible (see Dawid and Lauritzen, 2005).

\subsection{Composite scores}

In this section we consider the case of a multidimensional variable
$\bX$.  Let $\bX^*$ be a subvector of (or, more generally, a function
of) $\bX$, and let $S^*$ be a proper scoring rule for $\bX^*$.  Then
we can define a proper scoring rule $S$ for $\bX$ as $S(\bx, Q) :=
S^*(\bx^*,Q^*)$, where $Q^*$ denotes the marginal distribution of
$\bX^*$ when $\bX \sim Q$.  Alternatively, let $\bX^\dag$ denote
another subvector or function of $\bX$.  Then a proper scoring rule
can be generated as $S(\bx,Q) = S^*(\bx^*, Q^\dag)$, where $Q^\dag$
denotes the conditional distribution, when $\bX\sim Q$, of $\bX^*$,
given $\bX^\dag =\bx^\dag$.  By an abuse of language, we may refer to
the specification of $(\bX^*, \bX^\dag)$ as a {\em conditional
  variable\/}, $\bX_0$ say, and that of $Q^\dag$, for every value
$\bx^\dag$ of $\bX^\dag$, as its distribution, $Q_0$ say, and then we
write $\bX_0 \sim Q_0$.

Now let $\{\bX_k\}$ be a collection of marginal and/or conditional
variables, and let $S_k$ be a proper scoring rule for $\bX_k$.  Then
we can construct a proper scoring rule for $\bX$ as
\begin{equation}
  \label{eq:sumscore} S(\bx, Q) = \sum_k S_k(\bx_k, Q_k)
  \ ,
\end{equation}
where $\bX_k \sim Q_k$ when $\bX \sim Q$.  The form
(\ref{eq:sumscore}) localises the problem to the $\{\bX_k\}$, which
can simplify computation.

We term a scoring rule of the form \eqref{eq:sumscore} a {\em
  composite scoring rule\/}.  In the special case that each $S_k$ is
the log score, (\ref{eq:sumscore}) becomes a (negative log) {\em
  composite likelihood\/} (see, \eg, Varin {\em et al.\/}, 2011).
Composite likelihood is often considered as a surrogate for the full
likelihood function, useful in models with a complex dependence
structure.  The above reformulation allows us to treat composite
likelihood in its own right, as supplying a proper scoring rule.  And
from this point of view, as we shall see, there is nothing special
about composite likelihood: most of the existing results about it
extend with very little change to the more general case of an
arbitrary proper scoring rule (whether or not constructed as a
composite score).

\begin{example}
  \begin{rm}
    Consider a spatial process $\bX = (X_v : v \in V)$, where $V$ is a
    set of lattice sites.  For a joint distribution $Q$ for $\bX$, let
    $Q_v$ be the family of conditional distributions for $X_v$, given
    the values of $\bX_{\setminus v}$, the variables at all other
    sites.  If $Q$ is Markov, $Q_v$ depends only on $\bX_{\nei{v}}$
    (variables at sites neighbouring $v$).  We can then construct a
    proper scoring rule
    \begin{math}
      S(x, Q) = \sum_v S_0(x_v, Q_v),
    \end{math}
    where $S_0$ is a proper scoring rule for the state at a single
    site.  When $S_0$ is the log score this is the (negative log) {\em
      pseudo-likelihood\/} of Besag (1975).  For binary $X_v$ and
    $S_0$ the Brier score, it leads to the {\em ratio matching\/}
    method of Hyv\"arinen (2005).  Some comparisons may be found in
    Dawid and Musio (2013).
  \end{rm}
\end{example}



\section{Scoring rule inference}

Let ${\cal P} = \{P_{\theta}: \theta \in \Theta\}$, with $\Theta$ an
open subset of $\R^p$, be a parametric family of distributions on
$\cX$, and let $p_\theta(x)$ denote the probability density function
of $P_\theta$.  The validity of inference about $\theta$ using scoring
rules can be justified invoking the general theory of unbiased
estimating functions.

Consider a proper scoring rule $S$ on $\cX$, and write $S(x,\theta)$
for $S(x,P_\theta)$ and $s(x,\theta)$ for the gradient vector of
$S(x,\theta)$ with respect to $\theta$, that is,
\begin{eqnarray}
  s(x,\theta) = \nabla_\theta S(x,\theta) = \frac{\partial S(x, \theta)}{\partial \theta}
  \ .
  \label{ef}
\end{eqnarray}

For $X \sim P$, where $P$ might not belong to ${\cal P}$, we can
approximate $P$ within ${\cal P}$ by $P_{\theta_P}$, where
\begin{equation}
  \label{eq:approxd}
  \theta_P = \arg\min_\theta D(P,P_\theta),
\end{equation}
where $D$ is the discrepancy associated with $S$.  In particular, if
$P=P_{\theta_0}\in {\cal P}$, where $\theta_0$ is the true value of
the parameter, then $\theta_P = \theta_0$.  Since $D(P,P_\theta) =
S(P,P_\theta) - H(P)$, (\ref{eq:approxd}) is equivalent to
\begin{equation}
  \label{eq:approxs}
  \theta_P = \arg\min_\theta S(P,P_\theta).
\end{equation}

Now let $(x_{1},\ldots,x_{n})$ be a random sample of size $n$ from
$P$, and let $\widehat P_n$ be the associated empirical distribution.
Then we can take $\widehat{\theta}_{S} = \theta_{\widehat P_n}$ as a
point estimate of $\theta_P$: that is, $\widehat{\theta}_{S}$ is the
value of $\theta$ minimising $S(\widehat P_n, P_\theta)$.
Equivalently, it minimises $n S(\widehat P_n, P_\theta)$, which is
just the total {\em empirical score\/}
$$
S(\theta) = \sum_{i=1}^n S(x_{i}, \theta) \ .
$$
Thus the {\em scoring rule estimate\/} of $\theta_P$ is
$$
\widehat{\theta}_{S}=\arg\min_{\theta} S(\theta) = \arg\min_{\theta}
\sum_{i=1}^{n}S(x_i,\theta) \ ,
$$
which (under differentiability conditions) is the solution of the {\em
  scoring rule estimating equation\/}
\begin{equation}
  \label{eq:1}
  s(\theta) = \sum_{i=1}^{n}s(x_i,\theta) = 0.
\end{equation}
Note that when $S(\theta)$ is the log score, \ie\ $S(\theta)=-
\sum_{i=1}^n \log p_\theta(x_i)$, the scoring rule estimating equation
(\ref{eq:1}) is just the (negative of) the likelihood equation, and
the scoring rule estimate is just the maximum likelihood estimate.

For the special case that the discrepancy $D$ is the Tsallis/density
power divergence, Basu {\em et al.\/} (1998) note that---unlike many
other applications of minimum distance estimation (see for instance
Cao {\em et al.\/}, 1995)---this procedure does not require the
preliminary construction of a continuous nonparametric density
estimate of the true density $p(\cdot)$, so avoiding complications
such as bandwidth selection.  This pleasant property extends to all
minimum discrepancy estimates based on a proper scoring rule.

Generalising a familiar property of the likelihood equation, the
following theorem (see Dawid and Lauritzen, 2005; Dawid, 2007) shows
that, for any proper scoring rule and any family of distributions, the
scoring rule estimating equation \eqref{eq:1} is unbiased.

\begin{theorem}
  \label{teo:1}
  For the scoring rule estimating function $s(x,\theta)$, it holds
  that
  $$
  \E_P\, \left\{ s(X,\theta_P) \right\} =0 \ ,
  $$
  where $\E_P (\cdot)$ denotes expectation with respect to $P$.
\end{theorem}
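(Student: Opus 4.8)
The plan is to exploit the fact that $\theta_P$ is defined as a \emph{minimiser} of the expected score, and then to recognise $\E_P\{s(X,\theta)\}$ as the gradient in $\theta$ of that expected score. The unbiasedness will drop out of a first-order stationarity condition combined with an interchange of differentiation and integration.

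First I would rewrite the objective. By \eqref{eq:approxs} we have $\theta_P = \arg\min_\theta S(P,P_\theta)$, and the expected score unfolds as $S(P,P_\theta) = \E_{X\sim P}\,S(X,P_\theta) = \E_P\,S(X,\theta)$. Because $\Theta$ is an open subset of $\R^p$, the minimiser $\theta_P$ lies in the interior of the parameter space, so, assuming the map $\theta \mapsto S(P,P_\theta)$ is differentiable at $\theta_P$, the usual first-order stationarity condition applies:
\begin{equation}
  \nabla_\theta\, S(P,P_\theta)\big|_{\theta=\theta_P} = 0 \ .
  \label{eq:foc-sketch}
\end{equation}

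The key step is then to pass the gradient through the expectation. Granting this interchange, I would write
\begin{equation}
  \nabla_\theta\, S(P,P_\theta) = \nabla_\theta\, \E_P\,S(X,\theta) = \E_P\,\nabla_\theta S(X,\theta) = \E_P\,s(X,\theta) \ ,
  \label{eq:interchange-sketch}
\end{equation}
where the last equality is just the definition \eqref{ef} of the estimating function $s$. Evaluating \eqref{eq:interchange-sketch} at $\theta = \theta_P$ and combining with \eqref{eq:foc-sketch} gives $\E_P\{s(X,\theta_P)\} = 0$, which is the assertion.

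The main obstacle is the justification of the interchange in \eqref{eq:interchange-sketch}, which is not automatic and needs regularity conditions: one wants $S(x,\theta)$ to be differentiable in $\theta$ for $P$-almost all $x$, with its gradient $s(x,\theta)$ dominated, on a neighbourhood of $\theta_P$, by a $P$-integrable envelope, so that a dominated-convergence argument legitimises moving $\nabla_\theta$ inside $\E_P$. The openness of $\Theta$ supplies the interior point needed for \eqref{eq:foc-sketch}, while properness of $S$ is what guarantees (via $D(P,P_\theta) = S(P,P_\theta) - H(P)$, with $H(P)$ independent of $\theta$) that $\theta_P$ is a genuine minimiser rather than an arbitrary critical point. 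Beyond securing that $\theta_P$ is stationary, properness plays no further role; the argument in fact delivers unbiasedness at any interior stationary point of $\theta \mapsto \E_P\,S(X,\theta)$.
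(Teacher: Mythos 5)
Your proposal is correct and follows exactly the paper's own argument: $\theta_P$ minimises the expected score $\E_P\,S(X,\phi)$, so the first-order condition combined with the interchange of differentiation and expectation yields $\E_P\,s(X,\theta_P)=0$. Your additional remarks on openness of $\Theta$, the dominated-convergence justification, and the observation that only stationarity of $\theta_P$ is really used are sensible elaborations of the regularity conditions the paper invokes implicitly.
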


\begin{proof}
  For fixed $P$, $\E_{P}\, S(X,\phi)$ is minimised at $\phi =
  \theta_P$.  Thus, under sufficient regularity to allow interchange
  of expectation over $X$ and differentiation with respect to
  $\theta$, we have
  \begin{eqnarray*}
    0 &=& \left.\nabla_\phi\, \E_P\,S(X,\phi)\right|_{\phi=\theta_P}\\
    &=&  \left.\E_P\,\nabla_\phi\,S(X,\phi)\right|_{\phi=\theta_P}\\
    &=& \E_P\,s(X,\theta_P).
  \end{eqnarray*}
\end{proof}

\begin{corollary}
  \label{cor:1}
  For $P=P_\theta \in {\cal P}$,
  $$
  \E_{\theta}\, \left\{s(X,\theta) \right\} =0 \ ,
  $$
  where $\E_\theta (\cdot)$ denotes expectation with respect to
  $P_\theta$.
\end{corollary}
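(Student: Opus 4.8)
The plan is to derive the corollary as a direct specialisation of Theorem~\ref{teo:1}, taking the data-generating distribution $P$ to be a member $P_\theta$ of the model ${\cal P}$ itself. The conclusion of that theorem reads $\E_P\{s(X,\theta_P)\}=0$, so the only thing that needs verifying is that, for $P=P_\theta$, the best-approximating parameter $\theta_P$ defined in~\eqref{eq:approxd} coincides with $\theta$; the displayed identity then follows by substitution.

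To check $\theta_{P_\theta}=\theta$, I would recall from \secref{psr} that properness of $S$ is equivalent to the associated divergence $D(P,Q)=S(P,Q)-S(P,P)$ being non-negative, with $D(P,P)=0$. Setting $P=P_\theta$ and regarding $\phi\mapsto D(P_\theta,P_\phi)$ as the objective in~\eqref{eq:approxd}, this function is non-negative everywhere and attains the value $0$ at $\phi=\theta$. Hence its minimum is attained at $\phi=\theta$, so we may take $\theta_{P_\theta}=\theta$, exactly as already remarked in the paragraph following~\eqref{eq:approxd} (the observation that $P=P_{\theta_0}$ forces $\theta_P=\theta_0$).

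With this in hand the result is immediate: substituting $P=P_\theta$ and $\theta_P=\theta$ into the conclusion of Theorem~\ref{teo:1} yields $\E_\theta\{s(X,\theta)\}=0$, which is the claim. The closest thing to an obstacle is the possible non-uniqueness of the minimiser when $S$ is proper but not strictly proper, so that $D(P_\theta,P_\phi)$ might vanish at some $\phi\neq\theta$; but this does not affect the argument, since all that is used is that the minimum is attained at $\phi=\theta$, which is guaranteed by $D\geq 0$ together with $D(P_\theta,P_\theta)=0$. Under the usual identifiability assumption on ${\cal P}$ the minimiser is in fact unique and equal to $\theta$.
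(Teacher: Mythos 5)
Your proposal is correct and matches the paper's (implicit) argument exactly: the corollary is obtained by specialising Theorem~\ref{teo:1} to $P=P_\theta$, using the fact noted after~\eqref{eq:approxd} that $\theta_{P_\theta}=\theta$, which follows from $D\geq 0$ and $D(P_\theta,P_\theta)=0$. Your remark on non-uniqueness of the minimiser under mere (non-strict) properness is a sensible extra precaution but changes nothing of substance.
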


As a consequence of Corollary~\ref{cor:1}, we have that equation
(\ref{eq:1}) delivers an unbiased estimating equation for the
parameter $\theta$, that is the first Bartlett identity holds.  The
solution thus forms a special case of $M$-estimation (see, among
others, Hampel {\em et al.\/}, 1986, and Huber and Ronchetti, 2009).
An important feature of this approach is that the choice of the
scoring rule is entirely independent of the specific estimation
problem under consideration.  Any such choice supplies a universal
$M$-estimation procedure, applying across all possible models in
mutually consistent fashion.  This thus extends the familar universal
applicability of maximum likelihood estimation to scoring rules other
than the log score.

\subsection{Example: Bregman estimation}

Consider the separable Bregman score given by \eqref{eq:sepbreg}.  We
have
\begin{equation}
  \label{eq:sbreg}
  -s(x,\theta) = \lambda(x,\theta) - \E_\theta \lambda(X,\theta)
\end{equation}
with
\begin{eqnarray}
  \label{eq:lambda}
  \lambda(x,\theta) &=& \nabla_\theta\psi'\{p_\theta(x)\}\\
  &=&  \psi''\{p_\theta(x)\}\nabla_\theta p_\theta(x).
\end{eqnarray}
Since the function $\psi$ was required to be convex, we have that
$\alpha := \psi''$ must be non-negative.  Any such choice of $\alpha$
determines a suitable function $\psi$, and hence a separable Bregman
scoring rule.  We term such a choice for $\alpha$ a {\em Bregman
  gauge\/}.

Having fixed on a Bregman gauge function $\alpha$, we can now solve
any estimation problem, of any parametric dimensionality, based on
observations on $X$, by using the estimating function
\begin{equation}
  \label{eq:univest}
  \lambda(x,\theta) = \alpha\{p_\theta(x)\}\nabla_\theta p_\theta(x).
\end{equation}
An unbiased estimating equation for $\theta$, yielding an
$M$-estimator, is obtained by equating the sample and population
averages of $\lambda$.  The form \eqref{eq:univest} is, in this sense,
a universal estimating function.  For the special Bregman gauge
$\alpha(t) \equiv 1/t$ we recover Fisher's efficient score function
and maximum likelihood estimation.

\subsubsection{Location model}
\label{sec:bregloc}

Bregman inference for a location model model is particularly
straightforward.  For such a model we have
\begin{equation}
  \label{eq:locmod}
  p_\theta(x) = f(x-\theta),
\end{equation}
where $f$ is a density on $\R$ that we assume to be strictly positive
everywhere and continuously differentiable.  Using the separable
Bregman score formula \eqref{eq:sepbreg}, we note that the integral
term in $S(x,\theta)$ does not depend on $\theta$.  Consequently, for
the case of a location model, minimising the empirical score is
equivalent to maximising
\begin{equation}
  \label{eq:sumf}
  \sum_{i=1}^n \xi\{f(x_i-\theta)\},
\end{equation}
where $\xi = \psi'$ is a fixed increasing function (and $\xi'$ is just
the Bregman gauge $\alpha$).  This generalises maximum likelihood, for
which $\xi \equiv \ln$.

The maximum of \eqref{eq:sumf} will be obtained by setting its
derivative to $0$, leading to the unbiased estimating equation
\begin{displaymath}
  \sum_{i=1}^n \lambda(x_i,\theta) = 0,
\end{displaymath}
where, in accordance with \eqref{eq:univest},
\begin{displaymath}
  \lambda(x,\theta) = -\alpha\left\{f(x-\theta)\right\} f'(x-\theta).
\end{displaymath}
In this case, $\E_\theta \lambda(X,\theta)$ is identically $0$.

\section{Asymptotics}
\label{sec:asym}
Given a proper scoring rule $S$, we can apply standard results on
$M$-estimators to describe the properties of the scoring rule
estimator $\widehat{\theta}_{S}$ defined by \eqref{eq:1}.
Hereinafter, regularity conditions as detailed in \eg
Barndorff-Nielsen and Cox (1994, Section 9.2) or in Molenberghs and
Verbeke (2005, Sec.~9.2.2), are assumed.

\begin{theorem}
  \label{teo:2}
  Under suitable regularity conditions, the scoring rule estimator
  $\widehat{\theta}_{S}$ is consistent and asymptotically normal, with
  mean $\theta_P$ and variance $V$, where
$$
V = K^{-1} J (K^{-1})^T\ ,
$$
with
\begin{eqnarray}
  \label{eq:J}
  J &=& \E_{P}\left\{s(\theta_P)s(\theta_P)^T\right\}\\
  \label{eq:H}
  K &=& \E_{P}\left.\left\{\frac{\partial s(\theta)}
      {\partial \theta^T}\right\}\right|_{\theta=\theta_P}.
\end{eqnarray}
When $P=P_\theta$, then $V=V(\theta)=K(\theta)^{-1} J(\theta)
(K(\theta)^{-1})^T$, with $J(\theta)=\E_{\theta}
\left\{s(\theta)s(\theta)^T\right\}$ and $K(\theta)=\E_{\theta}
\left\{\frac{\partial s(\theta)} {\partial \theta^T}\right\}$.
\end{theorem}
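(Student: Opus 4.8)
The plan is to recognize that $\widehat{\theta}_S$ is the solution of the estimating equation $s(\theta) = \sum_{i=1}^n s(x_i,\theta) = 0$, and hence a standard $M$-estimator. I would therefore invoke the general asymptotic theory for $M$-estimators under the stated regularity conditions (Barndorff-Nielsen and Cox, 1994, Sec.~9.2), reducing the proof to verifying that the hypotheses of that theory hold and then identifying the pieces of the sandwich variance with the specific quantities $J$ and $K$ defined in \eqref{eq:J} and \eqref{eq:H}.

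First I would establish \emph{consistency}. The target $\theta_P$ is defined by \eqref{eq:approxs} as the unique minimiser of $\theta \mapsto S(P,P_\theta) = \E_P\,S(X,\theta)$, and by the law of large numbers the empirical criterion $n^{-1}S(\theta) = n^{-1}\sum_i S(x_i,\theta)$ converges pointwise (and, under the regularity conditions, uniformly on compacta) to this population criterion. A standard argument-max continuity result then gives $\widehat{\theta}_S \to \theta_P$ in probability. Crucially, \thmref{1} (or rather Theorem~\ref{teo:1}) supplies the unbiasedness $\E_P\{s(X,\theta_P)\} = 0$, which guarantees that $\theta_P$ is indeed a root of the population estimating equation, so the estimating equation is consistent for the intended target rather than for some other value.

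Next I would carry out the usual Taylor-expansion step for asymptotic normality. Expanding the estimating equation about $\theta_P$,
\begin{equation*}
  0 = \frac{1}{\sqrt{n}}\,s(\widehat{\theta}_S)
    = \frac{1}{\sqrt{n}}\,s(\theta_P)
      + \left(\frac{1}{n}\frac{\partial s(\theta)}{\partial\theta^T}\Big|_{\theta=\bar\theta}\right)\sqrt{n}\,(\widehat{\theta}_S - \theta_P),
\end{equation*}
with $\bar\theta$ on the segment between $\widehat{\theta}_S$ and $\theta_P$. By the multivariate central limit theorem applied to the \iid\ summands $s(x_i,\theta_P)$, which have mean zero (by Theorem~\ref{teo:1}) and covariance $J$, the leading term $n^{-1/2}s(\theta_P)$ is asymptotically $N(0,J)$. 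By consistency of $\widehat{\theta}_S$ together with a uniform law of large numbers for the Hessian of $S$, the bracketed matrix converges in probability to $K$. Solving for $\sqrt{n}\,(\widehat{\theta}_S - \theta_P)$ and applying Slutsky's theorem yields asymptotic normality with variance $K^{-1}J(K^{-1})^T$, as claimed. The final sentence, specialising to $P = P_\theta \in \mathcal{P}$ so that $\theta_P = \theta$, is then immediate from the definitions of $J(\theta)$ and $K(\theta)$.

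The main obstacle is not the algebra of the sandwich formula, which is routine once the $M$-estimation machinery is in place, but rather the \emph{verification of the regularity conditions} that license each analytic step: the interchange of differentiation and expectation used already in Theorem~\ref{teo:1}, the uniform convergence needed for consistency, the existence and finiteness of the matrices $J$ and $K$ (in particular the integrability of $s\,s^T$ and of the Hessian of $S$), the nonsingularity of $K$, and the domination condition that controls the Hessian near $\theta_P$ so that the $\bar\theta$-evaluated matrix may be replaced by its limit. Since the statement explicitly assumes ``suitable regularity conditions'' of the Barndorff-Nielsen and Cox type, I would treat these as given and simply flag where each is used, rather than attempting to state minimal primitive conditions.
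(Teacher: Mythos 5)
Your proposal is correct and follows exactly the route the paper intends: the paper gives no explicit proof of Theorem~\ref{teo:2}, instead invoking standard $M$-estimation asymptotics under the Barndorff-Nielsen and Cox regularity conditions, which is precisely the Taylor-expansion/CLT/Slutsky sandwich argument you spell out (with the unbiasedness from Theorem~\ref{teo:1} identifying $\theta_P$ as the target). Your write-up simply makes explicit what the paper leaves as a citation.
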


The matrix $G = V^{-1} $ is known as the Godambe information matrix
(Godambe, 1960).  The form of $V$ is due to the failure of the second
Bartlett identity since, in general, $K \neq J$.  In the special case
of the log score, \ie\ when $S(\theta)=- \sum_{i=1}^n \log
p_\theta(x_i)$, and for $P=P_\theta$, we have that $G =
K(\theta)=J(\theta)$ is the Fisher information matrix.


\subsection{Scoring rule test statistics}

Hypothesis testing and confidence regions for $\theta$ can be formed
in the usual way by using a consistent estimate of the asymptotic
variance $V$.  In particular, inference for $\theta$ can be based on
the scoring rule Wald-type statistic

\begin{eqnarray}
  W^S_w(\theta) = (\widehat\theta_S - \theta)^T V^{-1} (\widehat\theta_S - \theta)
  \ ,
  \label{swe}
\end{eqnarray}
which has an asymptotic chi-squared on $p$ degrees of freedom
distribution.  The asymptotic $\chi^2_p$ distributional result holds
also for the scoring rule score-type statistic $W^S_s(\theta) =
s(\theta)^T J^{-1} s(\theta)$.  A consistent estimate of $V$ can be
obtained using estimates of the matrices $J$ and $K$:
$$
\widehat J = \sum_{i=1}^n
s(x_i,\widehat{\theta}_{S})s(x_i,\widehat{\theta}_{S})^T \quad \quad
\widehat K = \sum_{i=1}^n \left.  \partial s(x_i,\theta)/\partial
  \theta^T \right|_{\theta=\widehat{\theta}_{S}} \ ;
$$
one can refer to Varin (2008) and Varin {\em et al.\/} (2011) for a
detailed discussion of the issues related to the estimation of $J$ and
$K$.

As is well known, Wald-type statistics lack invariance under
reparameterisation, and force confidence regions to have an elliptical
shape.  On the other hand, score-type statistics are seen to suffer
from numerical instability in many examples (see, \eg, Molenberghs and
Verbeke, 2005, Chap.\ 9).  In this respect a scoring rule ratio
statistic, of the form
\begin{eqnarray}
  W^S(\theta) = 2 \left\{ S(\theta) - S(\widehat\theta_S) \right\}
  \ ,
  \label{sw}
\end{eqnarray}
seems to be a more appealing basis for inference.  However, the
asymptotic distribution of (\ref{sw}) departs from the familiar
likelihood result, and involves a linear combination of independent
chi-square random variables with coefficients given by the eigenvalues
of a matrix related to Godambe information (see, among others,
Heritier and Ronchetti, 2004, and Varin {\em et al.\/}, 2011).  More
precisely,
$$
W^S(\theta) \stackrel{L}{\to} \sum_{j=1}^p \mu_j Z_j^2 \ ,
$$
where $\mu_1,\ldots, \mu_p$ are the eigenvalues of $J K^{-1} = K
G^{-1}$ and \linebreak $Z_1, \ldots,Z_p$ are independent standard
normal variates.

Analogous limiting results can be shown to hold for tests on subsets
of $\theta$.  Let $\theta$ be partitioned as $\theta=(\psi, \lambda)$,
where $\psi$ is a $p_0$-dimensional parameter of interest and
$\lambda$ is a $(p-p_0)$-dimensional nuisance parameter.  With this
partition, the scoring rule estimating function is similarly
partitioned as $s(\theta)=(s_\psi(\theta),s_\lambda(\theta))$, where
$s_\psi(\theta)=(\partial/\partial \psi) S(\theta)$ and
$s_\lambda(\theta)=(\partial/\partial \lambda) S(\theta)$.  Moreover,
consider the further partitions
$$
K = \left[
  \begin{array}{cc}
    K_{\psi \psi} & K_{\psi \lambda} \\
    K_{\lambda \psi} & K_{\lambda \lambda}
  \end{array}
\right] \ , \quad K^{-1} = \left[
  \begin{array}{cc}
    K^{\psi \psi} & K^{\psi \lambda} \\
    K^{\lambda \psi} & K^{\lambda \lambda}
  \end{array}
\right] \ ,
$$
and similarly for $G$ and $G^{-1}$.  Finally, let
$\widehat\theta_{S\psi}$ be the constrained scoring rule estimate of
$\theta$ for fixed $\psi$, and let $\widehat\psi_S$ be the $\psi$
component of $\widehat\theta_{S\psi}$.

A profile scoring rule Wald-type statistic for the $\psi$ component
may be defined as
$$
W^S_{wp} (\psi) = (\widehat\psi_S - \psi)^T (G^{\psi \psi})^{-1}
(\widehat\psi_S - \psi) \ ,
$$
and it has an asymptotic $\chi^2_{p_0}$ null distribution.  Moreover,
using the asymptotic result (Rotnitzky and Jewell, 1990)
$s_\psi(\widehat\theta_{S\psi}) \, \dot{\sim} \, N_{p_0} \left( 0,
  (K^{\psi \psi})^{-1} G^{\psi \psi} (K^{\psi \psi})^{-1} \right)$,
the profile scoring rule score-type statistic $W^S_{sp} (\psi) = s_\psi
(\widehat\theta_{S\psi})^T K^{\psi \psi} (G^{\psi \psi})^{-1} K^{\psi
  \psi} s_\psi (\widehat\theta_{S\psi})$ has an asymptotic
$\chi^2_{p_0}$ null distribution.  Finally, we have that the
asymptotic distribution of the profile scoring rule ratio statistic
for $\psi$, given by
$$
W^S_p(\psi) = 2 \left\{S(\widehat\theta_{S\psi}) - S(\widehat\theta_S)
\right\} \ ,
$$
is $\sum_{j=1}^{p_0} \nu_j Z_j^2$, where $\nu_1,\ldots,\nu_{p_0}$ are
the eigenvalues of $(K^{\psi \psi})^{-1} G^{\psi \psi}$.  This result
follows from Kent (1982, Theorem 3.1).  When evaluating the
eigenvalues of $(K^{\psi \psi})^{-1} G^{\psi \psi}$ it is possible to
replace $\theta$ with $\widehat\theta_{S\psi}$.


\subsection{Calibration of the scoring rule ratio statistic}

Since the asymptotic null distribution of scoring rule ratio
statistics depends both on the statistical model and on the parameter
of interest, adjustments to $W^S(\theta)$ and $W^S_p (\psi)$ are of
interest.  These adjustments aim for an asymptotic null distribution
that depends only on the dimension of the parameter of interest, and
they have been discussed in the statistical literature for general
pseudo-likelihood functions based on unbiased estimating equations;
see, among others, Varin (2008), Pace {\em et al.\/} (2011, 2013),
Varin {\em et al.\/} (2011), and references therein.

First, let us consider the scalar parameter case.

\begin{theorem}
  \label{thm:4.1}
  For $p=1$, the adjusted scoring rule ratio statistic satisfies
  \begin{eqnarray}
    W^S(\theta)_{adj} = \frac{W^S(\theta)}{\mu_1}  \stackrel{L}{\to} \chi^2_1
    \ ,
    \label{swadj}
  \end{eqnarray}
  where $\mu_1=J/K$.
\end{theorem}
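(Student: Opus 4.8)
The plan is to reduce everything to a one-dimensional second-order Taylor expansion of the empirical score about the estimator. Writing $s(\theta)=S'(\theta)$ and recalling that $\widehat\theta_S$ solves the estimating equation $s(\widehat\theta_S)=0$, I would expand
\[
W^S(\theta)=2\{S(\theta)-S(\widehat\theta_S)\}=S''(\widehat\theta_S)\,(\theta-\widehat\theta_S)^2+o_P(1),
\]
the linear term vanishing precisely because $s(\widehat\theta_S)=0$. This already reveals the $p=1$ instance of the general result quoted above: for a scalar parameter the matrix $JK^{-1}$ collapses to the single number $J/K$, so its only eigenvalue is $\mu_1=J/K$ and the limit $\sum_{j=1}^{p}\mu_j Z_j^2$ reduces to $\mu_1 Z_1^2$.

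Next I would identify the two ingredients that feed the expansion. By the law of large numbers together with the consistency of $\widehat\theta_S$ (\thmref{2}... i.e. Theorem~\ref{teo:2}), $n^{-1}S''(\widehat\theta_S)\to K$ in probability, where $K=\E_P\{\partial s/\partial\theta\}$ evaluated at $\theta_P$. From the $M$-estimation asymptotics of Theorem~\ref{teo:2}, at the (pseudo-)true value $\theta_P$ one has $\sqrt n(\widehat\theta_S-\theta_P)\stackrel{L}{\to}N(0,V)$ with $V=J/K^2$ in the scalar case. Evaluating the expansion at $\theta=\theta_P$ and combining the two facts,
\[
W^S(\theta_P)=\{n^{-1}S''(\widehat\theta_S)\}\,\{\sqrt n(\widehat\theta_S-\theta_P)\}^2+o_P(1)\stackrel{L}{\to}K\cdot V\,\chi^2_1=\frac{J}{K}\,\chi^2_1=\mu_1\chi^2_1,
\]
and dividing through by $\mu_1=J/K$ delivers the stated $\chi^2_1$ limit.

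The hard part is not the algebra but the justification of the expansion: controlling the quadratic remainder uniformly in a shrinking neighbourhood of $\theta_P$, and confirming that replacing $S''$ evaluated at $\widehat\theta_S$ by its probability limit $K$ (rather than evaluating it at $\theta_P$) is asymptotically harmless. Both follow from the assumed regularity conditions — smoothness of $S(x,\theta)$ in $\theta$, a uniform law of large numbers for $n^{-1}S''$, and $\sqrt n$-consistency of $\widehat\theta_S$ — which are exactly the conditions underpinning Theorem~\ref{teo:2}. I would also note that this derivation is transparently the $p=1$ specialisation of Kent's (1982, Theorem~3.1) argument, so no machinery beyond what already supports the general eigenvalue statement is required.
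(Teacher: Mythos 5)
Your proposal is correct, and it is in substance the same argument the paper relies on: the paper gives no self-contained proof, deferring to Heritier and Ronchetti (1994) and Pace \emph{et al.} (2011), and the quadratic expansion you write down is precisely the standard mechanism behind those cited results and behind the general eigenvalue statement already quoted in Section~4 (of which your result is the $p=1$ specialisation, the sole eigenvalue of $JK^{-1}$ being $\mu_1=J/K$). Your chain $W^S(\theta_P)=\{n^{-1}S''(\widehat\theta_S)\}\{\sqrt{n}(\widehat\theta_S-\theta_P)\}^2+o_P(1)\stackrel{L}{\to}K\,V\,\chi^2_1=\mu_1\chi^2_1$, followed by division by $\mu_1$ and Slutsky, is sound under the regularity conditions the paper assumes (uniform law of large numbers for $n^{-1}S''$ near $\theta_P$, $\sqrt{n}$-consistency of $\widehat\theta_S$), and you correctly flag that controlling the remainder and the substitution of $S''(\widehat\theta_S)$ for its probability limit is where those conditions are actually used; the only thing your write-up adds beyond the paper is that it makes explicit what the paper outsources to citation.
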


The proof of Theorem \ref{thm:4.1} is based of the well-known results
in Heritier and Ronchetti (1994) and Pace {\em et al.} (2011).

For $p > 1$, simple adjustments of the form (\ref{swadj}) for
$W^S(\theta)$ based on moment conditions can be considered as well.
For instance, first-order moment matching (see, \eg, Rotnitzky and
Jewell, 1990, Molenberghs and Verbeke, 2005, Sec.~9.3.3) gives the
adjustment
\begin{eqnarray}
  W^S(\theta)_{m1} = \frac{W^S(\theta)}{\bar\mu} \ ,
  \label{m1adj}
\end{eqnarray}
where $\bar\mu = \sum_{i=1}^p \mu_i/p = \mbox{tr}(JK^{-1})/p$.  A
$\chi^2_p$ approximation is used for the null distribution of
$W^S(\theta)_{m1}$.  Matching of moments up to higher order can also be
considered, as in Satterthwaite (1946) and Wood (1989); see also
Lindsay {\em et al.} (2000).
Note, however, that the correction (\ref{m1adj}) to $W^S(\theta)$ might
be inaccurate because it corrects only the first moment of the
distribution and it does not recover the usual $\chi^2_p$ asymptotic
distribution.

For $p>1$, calibration of $W^S(\theta)$ can be based on the following
theorem.

\begin{theorem}
  \label{thm:4.2}
  Using the rescaling factor
  \begin{eqnarray}
    A(\theta) =   \frac{s(\theta)^T J^{-1}
      s(\theta)}{s(\theta)^T K^{-1} s(\theta)}
    \ ,
    \label{at}
  \end{eqnarray}
  we have
  \begin{eqnarray}
    W^S(\theta)_{inv} = A(\theta) \, W^S(\theta)  \stackrel{L}{\to} \chi^2_p
    \ .
    \label{swinv}
  \end{eqnarray}
\end{theorem}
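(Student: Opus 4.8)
The plan is to show that the data-dependent rescaling turns $W^S(\theta)$ into the score-type statistic $s(\theta)^T J^{-1} s(\theta)$, which is already noted above to be asymptotically $\chi^2_p$. Throughout I take $\theta = \theta_P$, the value at which the null distribution is evaluated, and abbreviate $s := s(\theta_P)$. The argument rests on a single algebraic observation: the denominator $s^T K^{-1} s$ appearing in $A(\theta)$ in \eqref{at} is, to leading order, exactly $W^S(\theta_P)$ itself, so multiplying by $A(\theta_P)$ replaces the quadratic form $s^T K^{-1} s$ by $s^T J^{-1} s$.

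First I would carry out the standard $M$-estimation expansion of $W^S(\theta_P) = 2\{S(\theta_P) - S(\widehat\theta_S)\}$. Taylor-expanding $S$ about $\widehat\theta_S$ and using that the estimating equation $s(\widehat\theta_S) = 0$ annihilates the linear term gives
$$
W^S(\theta_P) = (\widehat\theta_S - \theta_P)^T K (\widehat\theta_S - \theta_P) + o_p(1),
$$
where $K$ is the matrix in \eqref{eq:H}; here I have used that the Hessian $\partial s/\partial\theta^T$, evaluated on the segment between $\theta_P$ and $\widehat\theta_S$, converges in probability to $K$, a consequence of the consistency supplied by Theorem~\ref{teo:2} together with a uniform law of large numbers. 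Next, linearising the estimating equation yields the representation $\widehat\theta_S - \theta_P = -K^{-1} s + o_p(n^{-1/2})$. Substituting this and using that $K$, being the expectation of a Hessian, is symmetric, I obtain the key identity
$$
W^S(\theta_P) = s^T K^{-1} s + o_p(1).
$$

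With this in hand the role of the rescaling factor is transparent. Since the denominator of $A(\theta_P)$ is precisely $s^T K^{-1} s$,
$$
A(\theta_P)\, W^S(\theta_P) = \frac{s^T J^{-1} s}{s^T K^{-1} s}\left(s^T K^{-1} s + o_p(1)\right) = s^T J^{-1} s + o_p(1),
$$
the remainder being negligible because $A(\theta_P) = O_p(1)$: its denominator converges to a positive linear combination of independent chi-squares and is therefore bounded away from $0$ in probability. Finally, by Corollary~\ref{cor:1} the score $s = \sum_i s(x_i,\theta_P)$ has mean zero and, by the central limit theorem, is asymptotically $N_p(0,J)$; hence $s^T J^{-1} s = (J^{-1/2}s)^T(J^{-1/2}s) \stackrel{L}{\to} \chi^2_p$. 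This is exactly the score-type statistic $W^S_s(\theta_P)$ already recorded above, and Slutsky's lemma then delivers $W^S(\theta)_{inv} \stackrel{L}{\to} \chi^2_p$.

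I expect the main obstacle to be the rigorous control of the remainders in the two Taylor expansions under the (unstated) regularity conditions of \secref{asym} --- in particular, verifying that the intermediate Hessian converges uniformly to $K$ on a shrinking neighbourhood of $\theta_P$ and that the cubic remainder is genuinely $o_p(1)$ after accounting for the scalings $s = O_p(n^{1/2})$, $\widehat\theta_S - \theta_P = O_p(n^{-1/2})$ and $K = O(n)$. A secondary point to handle carefully is that in practice $J$ and $K$ in $A(\theta)$ are replaced by consistent estimates $\widehat J,\widehat K$; one should remark that, since $\widehat J \to J$ and $\widehat K \to K$ in probability, this substitution leaves the limiting distribution unchanged by a further application of Slutsky's lemma.
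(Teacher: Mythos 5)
Your proposal is correct and takes essentially the same route as the paper: the paper's ``proof'' is only a pointer to the formulae of Pace {\em et al.\/} (2011, 2013), whose argument is precisely your chain $W^S(\theta_P) = s(\theta_P)^T K^{-1} s(\theta_P) + o_p(1)$, so that $A(\theta_P)\,W^S(\theta_P) = s(\theta_P)^T J^{-1} s(\theta_P) + o_p(1)$, which is the score-type statistic already noted to be asymptotically $\chi^2_p$. Your write-up simply supplies the $M$-estimation expansions and the Slutsky step that the paper delegates to that reference.
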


The proof of Theorem \ref{thm:4.2} is based on formulae in Pace {\em
  et al.} (2011, 2013), who discuss alternatives to moment-based
adjustments for likelihood-type ratio statistics, aiming to obtain a
statistic with the usual $\chi^2_p$ asymptotic distribution.

In the situation with nuisance parameters, adjustments of the form
$W^S_p (\psi)_{m1}$ and $W^S_p (\psi)_{m2}$ to $W^S_p (\psi)$, analogous
to $W^S(\theta)_{m1}$ and $W^S(\theta)_{m2}$, respectively, can be
easily defined using the eigenvalues $\nu_1,\ldots,\nu_{p_0}$ of
$(K^{\psi \psi})^{-1} G^{\psi \psi}$, evaluated at
$\widehat\theta_{S\psi}$.  The extension of $W^S(\theta)_{inv}$ in the
nuisance parameter case can be obtained following the results in Pace
{\em et al.\/} (2011).  We obtain
$$
W^S_p(\psi)_{inv} =
\frac{W^S_{sp}(\psi)}{s_\psi(\widehat\theta_{S\psi})^T K^{\psi \psi}
  (\widehat\theta_{S\psi}) s_\psi(\widehat\theta_{S\psi})} \,
W^S_p(\psi) \ .
$$

\section{Robustness}
\label{sec:IF}
The influence function (\IF) (see, \eg, Hampel {\em et al.}, 1986, Chap.\ 2) of an estimator
measures the effect on it of a small contamination at the point $x$,
standardized by the mass of that contamination.  The supremum of the
$\IF$ over the data-space measures the worst influence of such
contamination, so supplying a measure of gross-error sensitivity.  A
desirable robustness property for a statistical procedure is that the
gross-error sensitivity be finite, \ie, that the \IF\ be bounded.
This is termed {\em $B$-robustness\/}.

From the general theory of $M$-estimators (see, \eg, Huber and
Ronchetti, 2009), the \IF\ of the estimator $\widehat{\theta}_{S}$,
the solution of the unbiased estimating equation \eqref{eq:1}, is
given by
\begin{equation}
  \label{eq:IF}
  \IF(x;s,P) = K^{-1} s(x,\theta_P).
\end{equation}
Thus, if the function $s(x,\theta)$ is, for each $\theta$, bounded in
$x$, then the corresponding scoring rule estimator
$\widehat{\theta}_{S}$ is $B$-robust.  Note that, in general, the form
of the function $s(x,\theta)$ depends on the model ${\cal P}$ as well
as the scoring rule $S$.  Finally, notice that the \IF\ can also be
used to evaluate the asymptotic variance of $\widehat{\theta}_{S}$,
since
$$
V = E_P \left\{ \IF(X; s,P) \, \IF(X;s,P)^T \right\}\ .
$$


\subsection{Example: robustness of Bregman estimate}
\label{sec:bregrob}

A necessary and sufficient condition for $B$-robustness of the Bregman
estimate, where $s$ is given by \eqref{eq:sbreg} with $\lambda$
determined by \eqref{eq:univest}, is:
\begin{cond}
  \label{cond:nsc}
  For all $\theta$, $\lambda(x,\theta)$ is a bounded function of $x$.
\end{cond}

The above condition inextricably combines properties of the Bregman
gauge function $\alpha$ and the form of the model $p_\theta$.  We can
also identify a useful set of sufficient conditions for $B$-robustness,
which handles these ingredients separately.  First we introduce a
definition.
\begin{definition}
  \label{def:lb} We say that a function $f:\R^+\rightarrow\R^+ $ is
  {\em locally bounded\/} if $f(t)$ is bounded on each finite interval
  $0 < t < M$.
\end{definition}
In this case, $f(0) = \lim_{t \downarrow 0} f(t)$ (if it exists) must
be finite.  For our applications, this condition will typically be
sufficient.

\renewcommand{\theenumi}{(\roman{enumi})}

It now follows that a sufficient condition for $B$-robustness of the
Bregman estimate is:
\begin{cond}
  \label{cond:gsc}
  \quad\vspace{-4ex}\\
  \begin{enumerate}
  \item \label{it:psi} The Bregman gauge $\alpha = \psi''$ is locally
    bounded, and
  \item\label{it:p} both $p_\theta(x)$ and $\nabla_\theta p_\theta(x)$
    are bounded in $x$, for each $\theta$.
  \end{enumerate}
\end{cond}
Note that if \condref{nsc} or \condref{gsc}~\itref{p} hold for one
parametrisation, they equally hold for any other.

The Brier score, with $\psi(t)=t^2$, satisfies
\condref{gsc}~\itref{psi}---indeed, $\alpha(t) \equiv 2$ is bounded on
the whole of $(0,\infty)$.  Other such ``totally bounded'' examples
include $\psi(t)= 2 t \tan^{-1}(t) - \ln(1 + t^2)$, with
$\alpha(t)=2/(1+t^2)$, and $\psi(t)= (1+t)\ln(1 + t)$, with
$\alpha(t)=1/(1+t)$.  The Tsallis/density power score, with $\psi(t)
\propto t^\gamma$ and $\alpha(t) \propto t^{\gamma-2}$ is locally
bounded but not totally bounded for $\gamma> 2$.  However for the log
score, with $\psi(t) \equiv t\ln(t)$, $\alpha(t) \equiv 1/t$ is not
bounded at $0$, so this particular Bregman scoring rule violates the
local boundedness \condref{gsc}~\itref{psi}.  And this is reflected in
the fact that the maximum likelihood estimator is typically not
$B$-robust.

For a real location model, with $p_\theta(x) = f(x-\theta)$, the
Bregman score will yield a $B$-robust estimator if and only if
\begin{cond}
  \label{cond:nscloc}
  $(d/du) \psi'\{(f(u)\}=\psi''\{f(u)\} f'(u)$ is bounded.
\end{cond}
In particular (cf.\ Basu {\em et al.\/}, 1998), for a real location
model the necessary and sufficient condition that the Tsallis/density
power score supply
  a
$B$-robust estimator is that $f(u)^{\gamma-2} f'(u)$ be a bounded
function of $u$.

A {\em sufficient\/} condition for \condref{nsc} to hold is:
\begin{cond}
  \label{cond:sc}
  \quad\vspace{-4ex}\\
  \begin{enumerate}
  \item $\alpha$ is locally bounded
  \item\label{it:f} $f'(u)$ is bounded.
  \end{enumerate}
\end{cond}
\condref{sc}~\itref{f} implies \condref{gsc}~\itref{p}, since
boundedness of $f'$ implies boundedness of $f$ (see \lemref{bounded}
in \appref{bounded}).  For instance, \condref{sc}~\itref{f} holds for
the normal, logistic, Cauchy and extreme value distributions.


For a real scale model, with $p_\theta(x)=\theta f(\theta x)$
($x,\theta >0$), the Bregman score yields a $B$-robust estimator if and
only if
\begin{cond}
  \label{cond:scale}
  $\alpha\{\theta f(\theta x)\}\{f(\theta x) + \theta x f'(\theta
  x)\}$ is bounded in $x$ for all $\theta$.
\end{cond}

We have the following sufficient condition:
\begin{cond}
  \label{cond:scalebis}
  \quad\vspace{-4ex}\\
  \begin{enumerate}
  \item \label{it:alpha} $\alpha$ is locally bounded
  \item \label{it:beta} $f(u)$ and $u f'(u)$ are bounded on $\R^+$.
  \end{enumerate}

\end{cond}

We again remark that \condref{scalebis}~\itref{alpha} holds for the
Brier and Tsallis score, but not for the log score.  The log normal,
exponential, and Gamma (with $\alpha \geq 1$) densities satisfy
\condref{scalebis}~\itref{beta}.  For a general location-scale model,
and more generally for a regression-scale model, a sufficient
condition for \condref{nsc} to hold is: $(i)$ $\alpha$ is locally
bounded, and $(ii)$ $f(u)$, $f'(u)$ and $uf'(u)$ are bounded on
$\R^+$.

\section{Examples}
\label{sec:examples} In this section we provide simulation results to
assess coverage probabilities of confidence regions based on the
adjustments of the scoring rule ratio statistic $W^S(\theta)$.  Three
examples are described.  The first deals with a multivariate normal
distribution, the second with a location-scale model, and the third
with a linear regression model.  The examples are chosen so that we
can easily do closed form calculations for both the Tsallis score
(\ref{eq:tsallisscore}) and the log score.  In the two last examples
the focus is on showing the accuracy of the calibration of the scoring
rule ratio statistic, and on studying the robustness properties of the
Tsallis score with respect to classical robust procedures based on
$M$-estimators.


\begin{example}{\bf Equi-correlated normal model}
  \label{ex:Example1}

  \begin{rm}
    We discuss inference on the correlation coefficient $\rho$ of an
    equi-correlated multivariate normal distribution.  This
    illustrative example is considered by Cox and Reid
    (2004).

    Let $(X_i: i=1, \ldots,n)$ be independent realizations of a
    $q$-variate normal random variable, with standard margins and with
    corr$(X_{ir},X_{is})=\rho$ ($r,s=1,\ldots,q$, $r \neq s$).  Thus
    the density function of $X_i$ is
$$
p(x_i;\rho) = \frac{\exp \left\{ - \frac{1}{2(1-\rho)} \left(
      \sum_{r=1}^q x_{ir}^2 - \frac{\rho q^2}{1-\rho(q-1)} \overline
      x_i^2 \right) \right\}}{ \sqrt{ (2 \pi)^q (1-\rho)^{(q-1)}
    \{1+\rho(q-1)\}}} \ ,
$$
where $\overline x_i := \sum_{r=1}^q x_{ir}/q$.

Straightforward calculations show that the Tsallis empirical score is
$S(\rho)=\sum_{i=1}^n S(x_i,\rho)$, with
$$
S(x_i,\rho) = -\gamma p(x_i;\rho)^{(\gamma-1)} + \frac{(\gamma-1)
}{\sqrt{\gamma^q (2 \pi)^{q(\gamma-1)} (1-\rho)^{(\gamma-1)(q-1)}
    \{1+\rho(q-1)\}^{(\gamma-1)}}} \ .
$$
                            
In order to assess the quality of the proposed adjustment
$W^S(\rho)_{adj}$ (see \thmref{4.1}) of the scoring rule ratio
statistic based on $S(\rho)$, we ran a simulation experiment with
$n=30$, $q=10$ and $\rho=0.5$.  For comparison we also consider the
pairwise log-likelihood, given by
$$
\ell^P (\rho) = -\frac{nq(q-1)}{4} \log(1-\rho^2) - \frac{q - 1 +
  \rho}{2(1-\rho^2)} SS_W - \frac{(q - 1)(1 - \rho)}{2(1-\rho^2)}
\frac{SS_B}{q} \ ,
$$
where $SS_W = \sum_{i=1}^n \sum_{r=1}^q (x_{ir}-\overline x_i)^2$ and
$SS_B = q^2 \sum_{i=1}^n \overline x_i^2$: see Cox and Reid (2004),
Pace {\em et al.\/} (2011), who find that the adjustment of the
pairwise likelihood ratio statistics has reasonable coverage
properties.  Note that the pairwise log-likelihood, as an example of
composite log-likelihood, is a special case of a proper scoring rule.

Table~\ref{tabrho} reports the empirical coverages of confidence
intervals based on several statistics: the full likelihood ratio
$W(\rho)$, the Wald statistic from the full model $W_w(\rho)$, the
Tsallis Wald statistic $W^S_w(\rho)$ and the adjustment
(\ref{swadj}) of the Tsallis empirical score likelihood ratio
statistic $W^S(\rho)_{adj}$ for three values of $\lambda$.
Finally, also the pairwise Wald statistic $W^P_w(\rho)$ and the
adjustment (\ref{swadj}) of the pairwise likelihood ratio statistic
$W^P(\rho)_{adj}$ are given.  We note that the proposed adjustment
(\ref{swadj}) of $W^S(\rho)$ shows a reasonable performance in terms of
coverage.  In particular, when $\lambda$ is small, it proves to be a
good competitor of the pairwise likelihood ratio statistic
$W^P(\rho)_{adj}$, with the advantage of using the full likelihood.
However the Tsallis Wald statistic $W^S_w(\rho)$ appears
useless.

\begin{table}
  \begin{center}
    \begin{tabular}{|l|lll|lll|lll|} \hline $1-\alpha$ & 0.90 & 0.95 &
      0.99 \\ \hline
      $W(\rho)$                          & 0.903 & 0.942 &  0.993 \\
      $W_w(\rho)$                     &  0.903 & 0.939 & 0.994  \\
      $W^S_w(\rho)$, $\lambda=2$               & 1.000 & 1.000 & 1.000 \\
      $W^S_w(\rho)$, $\lambda=1.5$         &1.000 & 1.000 & 1.000  \\
      $W^S_w(\rho)$, $\lambda=1.25$      & 1.000 & 1.000 & 1.000 \\
      $W^S(\rho)_{adj}$, $\lambda=2$         & 0.789 & 0.830 & 0.883  \\
      $W^S(\rho)_{adj}$, $\lambda=1.5$   & 0.849 & 0.906 & 0.958  \\
      $W^S(\rho)_{adj}$, $\lambda=1.25$ &0.886 & 0.937 & 0.982  \\
      $W^P(\rho)_{adj}$             & 0.895 & 0.945 &0.991  \\
      $W^P_w(\rho)$ & 0.892 & 0.938 & 0.989 \\ \hline
    \end{tabular}
    \caption{Equicorrelated multivariate normal model.  Empirical
      coverage of $(1-\alpha)$ confidence intervals based on different
      statistics, based on 5.000 replications, with $n = 30$, $q =
      10$, $\rho = 0.5$, and $\lambda=2, 1.5, 1.25$.  }
    \label{tabrho}
  \end{center}
\end{table}    
\end{rm}
\end{example}


\begin{example}{\bf Scale and location model}
  \label{ex:Example2}

    \begin{rm}
      Let $\theta=(\mu,\sigma)$, where $\mu \in \Real$ is a location
      parameter and $\sigma>0$ a scale parameter.  In this case we
      have $p (x;\theta)=p_0\{(x-\mu)/\sigma\}/\sigma$, where
      $p_0(\cdot)$ is the standard distribution.  The Tsallis
      empirical score is $S(\theta)=\sum_{i=1}^n S(x_i,\theta)$, with
$$
S(x_i,\theta) = -\gamma \, p(x_i;\theta)^{(\gamma-1)} +
\frac{(\gamma-1) }{\sigma^{(\gamma-1)}} \int p_0(x)^\gamma \, dx \ ,
$$
for $i=1,\ldots,n$.

We ran a simulation experiment, for several values of $n$ and with
$\lambda=2, 1.5, 1.25$, in order to assess the quality of the proposed
adjustments of the Tsallis scoring rule ratio statistic based on
$S(\theta)$.  For comparison, we considered also the well-known Huber
location-scale $M$-estimator (see Hampel {\em et al.\/}, 1986,
Sec.~4.2).  For this estimator, only the Wald type statistic
$W^H_w(\theta)$ is available.

Table \ref{tabSL} gives the results of a Monte Carlo experiment that
compares confidence regions for $\theta$ based on the full likelihood
ratio $W(\theta)$, the Tsallis Wald statistic $W^S_w(\theta)$
and the adjustments (\ref{swadj}) and (\ref{m1adj}) of the Tsallis
empirical score likelihood ratio statistic, and the Huber Wald
statistic $W^H_w(\theta)$, when the central model is the normal one.
Data are generated from two different distributions: the $N(0,1)$
model, and the contaminated model $0.95 \cdot N(0,1) + 0.05 \cdot
N(0,10^2)$.  We note that the proposed adjustments of
$W^S(\theta)$ show a reasonable performance in terms of
coverage, both under the central model and under the contaminated
model.  However, the Tsallis Wald statistic $W^S_w(\theta)$
and the Huber Wald statistic $W^H_w(\theta)$ exhibit poor coverage under
the contaminated model.

\begin{table}
  \begin{center}
    \begin{tabular}{|l|ccc|ccc|} \hline & & $N(0,1)$ & & &
      cont.~$N(0,1)$ & \\ \hline & $n=10$ & $n=20$ & $n=30$ & $n=10$ &
      $n=20$ & $n=30$ \\ \hline
      $W(\theta)$                          & 0.934 & 0.938 &  0.942 & 0.652 & 0.475 &  0.357  \\
      $W^S_w(\theta)$, $\lambda=2$               & 0.914 & 0.926 & 0.937  & 0.913 & 0.926 & 0.931  \\
      $W^S_w(\theta)$, $\lambda=1.5$         & 0.928 & 0.939 & 0.942  & 0.914 & 0.924 & 0.926  \\
      $W^S_w(\theta)$, $\lambda=1.25$      & 0.948 & 0.947 & 0.945  & 0.898 & 0.908 & 0.908  \\
      $W^S(\theta)_{inv}$, $\lambda=2$         & 0.872 & 0.918 & 0.931 & 0.886 & 0.931 & 0.939   \\
      $W^S(\theta)_{inv}$, $\lambda=1.5$   & 0.912 & 0.936 & 0.942 & 0.914 & 0.937 & 0.937  \\
      $W^S(\theta)_{inv}$, $\lambda=1.25$ &0.925 & 0.940 & 0.942  & 0.916 & 0.938 & 0.935 \\
      $W^S(\theta)_{m1}$, $\lambda=2$         & 0.981 & 0.967 & 0.962  & 0.978 & 0.959 & 0.952  \\
      $W^S(\theta)_{m1}$, $\lambda=1.5$   & 0.954 & 0.953 & 0.953 & 0.948 & 0.945 & 0.944  \\
      $W^S(\theta)_{m1}$, $\lambda=1.25$ &0.942 & 0.947 & 0.943 &0.925 & 0.937 & 0.934  \\
      $W^H_w(\theta)$ & 0.966 & 0.953 & 0.954 & 0.925 & 0.912 & 0.915 \\
      \hline
    \end{tabular}
    \caption{Scale and location model.  Empirical coverages (based on
      5000 replications) of 0.95 confidence regions based on different
      statistics, under the $N(0,1)$ model and the $0.95 \cdot N(0,1)
      + 0.05 \cdot N(0,10^2)$ contaminated model, with $\lambda=2,
      1.5, 1.25$.  }
    \label{tabSL}
  \end{center}
\end{table}
\end{rm}
\end{example}


\begin{example}{\bf Linear regression model}
  \label{ex:Example3}

    \begin{rm}
      Consider the linear regression model
      \begin{equation}
        \label{eq:linreg}
        y = X \beta + \sigma \varepsilon \ ,  
      \end{equation}
      where $X$ is a fixed $n \times p$ matrix, $\beta \in \Real^p$
      $(p \geq 1)$ an unknown regression coefficient, $\sigma>0$ a
      scale parameter and $\varepsilon$ an $n$-dimensional vector of
      random errors from a standard normal distribution.  We take
      $\sigma=1$ as known.  The Tsallis empirical score is
      $S(\beta)=\sum_{i=1}^n S(y_i,\beta)$, with
$$
S(y_i,\beta) = - \frac{\gamma}{(\sqrt{2 \pi})^{\gamma -1}} \exp
\left\{ - \frac{\gamma-1}{2} (y_i-x_i^{\T} \beta)^2 \right\}
+(\gamma-1) \int \phi(x)^\gamma \, dx \ ,
$$
where $x_i^{\T}$ is the $i$-th row of $X$ and $\phi(\cdot)$ is the
standard normal density.

In order to assess the quality of the proposed adjustments of the
Tsallis scoring rule ratio statistic based on $S(\beta)$, we ran a
simulation experiment with $p=3$ and for several values of $n$, with
$\lambda=2, 1.5, 1.25$.  For comparison, we considered also the
well-known Huber regression $M$-estimator (see Hampel {\em et al.\/},
1986).  As in the previous example, for this estimator only the Wald
type statistic $W^H_w(\beta)$ is available.

Our specific model is as follows.  In \eqref{eq:linreg}, all entries of
the first column of $X$ are $1$, those of the second column are
generated as independent standard normal variables, $z_1,\ldots,z_n$,
while the third column consists of the integers from $1$ to $n$.  The
model is $y_i = \beta_1 +\beta_2 z_i +\beta_3 i + \varepsilon_i$, and
the true parameter is $\beta=(1,2,3)$.  As for \exref{Example2},
$\varepsilon_1,\ldots,\varepsilon_n$ were generated from one of two
distributions: the $N(0,1)$ model, or the contaminated model $0.95
\cdot N(0,1) + 0.05 \cdot N(0,10^2)$.

Table~\ref{tabreg} compares confidence regions for $\beta$ based on
the full likelihood ratio $W(\beta)$, the Tsallis Wald statistic
$W^S_w(\beta)$ and the adjustments (\ref{swadj}) and
(\ref{m1adj}) of the Tsallis empirical score likelihood ratio
statistic, and the Huber Wald statistic $W^H_w(\beta)$, when the central
model is the normal one.  We note that the proposed adjustments of
$W^S(\theta)$ show a satisfactory performance in terms of
coverage, in particular when $\lambda$ is small, both under the
central model and under the contaminated model.  However the Tsallis
Wald statistic $W^S_w(\rho)$ and the Huber Wald statistic
$W^H_w(\theta)$ have poor coverage under the contaminated model.

\begin{table}
  \begin{center}
    \begin{tabular}{|l|ccc|ccc|} \hline & & $N(0,1)$ & & &
      cont.~$N(0,1)$ & \\ \hline & $n=15$ & $n=30$ & $n=50$ & $n=15$ &
      $n=30$ & $n=50$ \\ \hline
      $W(\beta)$                          & 0.950 & 0.954 &  0.951           & 0.605 & 0.518 &  0.417  \\
      $W^S_w(\beta)$, $\lambda=2$               & 1.000 & 1.000 & 1.000           & 1.000 & 1.000 & 1.000  \\
      $W^S_w(\beta)$, $\lambda=1.5$         & 0.999 & 0.998 & 0.998            & 0.988 & 0.998 & 1.000  \\
      $W^S_w(\beta)$, $\lambda=1.25$      & 0.966 & 0.927 & 0.975             & 0.884 & 0.948 & 0.976  \\
      $W^S(\beta)_{inv}$, $\lambda=2$         & 0.958 & 0.955 & 0.955            & 0.963 & 0.943 & 0.940   \\
      $W^S(\beta)_{inv}$, $\lambda=1.5$   & 0.949 & 0.954 & 0.952             & 0.948 & 0.948 & 0.939  \\
      $W^S(\beta)_{inv}$, $\lambda=1.25$ &0.949& 0.952 & 0.952             & 0.940 & 0.941 & 0.934 \\
      $W^S(\beta)_{m1}$, $\lambda=2$         & 0.958 & 0.955 & 0.955           & 0.963 & 0.943 & 0.940  \\
      $W^S(\beta)_{m1}$, $\lambda=1.5$   & 0.949 & 0.954 & 0.952           & 0.948 & 0.948 & 0.940  \\
      $W^S(\beta)_{m1}$, $\lambda=1.25$ &0.949 & 0.952 & 0.952            &0.940 & 0.941 & 0.934  \\
      $W^H_w(\beta)$ & 0.944 & 0.954 & 0.952 & 0.876 & 0.910 & 0.899 \\
      \hline
    \end{tabular}
    \caption{Linear regression model.  Empirical coverages (based on
      5000 replications) of 0.95 confidence regions based on different
      statistics, under the $N(0,1)$ and the $0.95 \cdot N(0,1) + 0.05
      \cdot N(0,10^2)$ models, with $\lambda=2, 1.5, 1.25$.  }
    \label{tabreg}
  \end{center}
\end{table}
\end{rm}
\end{example}

\section{Concluding remarks}
We have presented a general approach to parametric estimation theory,
based on replacing the full log-likelihood by a proper scoring rule.
This includes well-studied cases such as full, pseudo, composite,
pairwise \ldots log-likelihoods, as well a very wide variety of other
cases, not directly or indirectly related to likelihood at all.  Under
smoothness conditions, any proper scoring rule can be applied to any
statistical model, and delivers an associated $M$-estimator.  While
this may lose efficiency in comparison with full likelihood methods,
it can exhibit improved robustness or computational advantages.  In
\secref{IF} we identified some common situations where use of an
appropriate scoring rule achieves $B$-robustness.

We can use a scoring-rule estimator to construct hypothesis tests and
confidence intervals.  In addition to obtaining analogues of the Wald
and score test statistics, which are available for general
$M$-estimators, when basing inference on a scoring rule we also have
an analogue of the Wilks (log-likelihood ratio) statistic.  The
distributions of these analogues differ from those based on the full
likelihood, and we have considered adjustments to bring them more into
line.  The simulation studies in \secref{examples} indicate that
adjusted scoring rule likelihood-ratio type statistics yield
confidence regions whose coverage properties are satisfactory.  Both
the moment-matching correction and the correction given in
\thmref{4.2} perform well, and are preferred to the use of Wald type
statistics.

In more realistic applications, analytic expressions for the required
terms $K$ and $J$ may be unavailable, and numerical evaluation would
then seem to offer the most straightforward solution.  This issue is
under investigation.

\newpage
\appendix

\section{Boundedness}
\label{sec:bounded}

\begin{lemma}
  \label{lem:bounded}
  Let $P$ be a distribution on $\R$, with differentiable probability
  density function $f(\cdot)$.  Suppose $|f'(x)| \leq K$, all $x$.
  Then $f(x) \leq 1+2K$.
\end{lemma}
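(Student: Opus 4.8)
The plan is to argue by contradiction via the fact that $f$ integrates to $1$, so it cannot stay large over a long interval. Fix a point and write $M := f(x_0)$ for the value we wish to bound. If $M \le 1$ there is nothing to do, since trivially $M \le 1 \le 1 + 2K$; so I would assume $M > 1$ from now on. The first preliminary observation is that the hypothesis $|f'| \le K$ makes $f$ globally $K$-Lipschitz: for any $x,y$ the mean value theorem gives $f(x) - f(y) = f'(\xi)(x-y)$ for some intermediate $\xi$, whence $|f(x) - f(y)| \le K\,|x-y|$.

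The crux of the proof is to locate a point \emph{near} $x_0$ at which $f$ is already small, using only nonnegativity and $\int_{\R} f = 1$. Consider the compact interval $[x_0, x_0+2]$. Because $f \ge 0$, we have $\int_{x_0}^{x_0+2} f \le \int_{\R} f = 1$. If instead $f(x) > 1$ held for \emph{every} $x$ in this interval, then by continuity the minimum $m$ of $f$ over the interval would satisfy $m > 1$, giving $\int_{x_0}^{x_0+2} f \ge 2m > 2$, which contradicts the bound $\int_{x_0}^{x_0+2} f \le 1$. Hence there exists $x_1 \in [x_0, x_0+2]$ with $f(x_1) \le 1$, and in particular $|x_1 - x_0| \le 2$. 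Combining this with the Lipschitz estimate yields $M - 1 \le f(x_0) - f(x_1) \le K\,|x_0 - x_1| \le 2K$, that is, $f(x_0) \le 1 + 2K$, as required.

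I do not expect any serious obstacle here: the only step requiring a little care is the existence of the nearby small-value point $x_1$, which is exactly where integrability and nonnegativity of the density enter, and the harmless edge case $M \le 1$. I would also note that the stated constant is deliberately loose: integrating the triangular lower bound $f(x) \ge M - K\,|x - x_0|$ over the region where it is nonnegative gives $1 \ge M^2/K$, hence the sharper conclusion $f(x_0) \le \sqrt{K}$. The linear bound $1 + 2K$ is, however, entirely sufficient for its intended use, namely to guarantee (in \condref{sc}) that boundedness of $f'$ forces boundedness of $f$.
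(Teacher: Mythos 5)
Your proof is correct and reaches the paper's constant $1+2K$, but by a genuinely different route. The paper's argument is global, via a dyadic decomposition of the range of $f$: with $A_- = \{x: f(x) \le 1\}$ and $A_n = \{x: 2^n < f(x) \le 2^{n+1}\}$, it uses $1 \ge P(A_n) \ge 2^n \lambda(A_n)$ to get $\lambda(A_n) \le 2^{-n}$, bounds the variation of $f$ on each $A_n$ by $K\lambda(A_n)$, and sums the geometric series to conclude that the total variation of $f$ outside $A_-$ is at most $2K$; any excursion of $f$ above level $1$ must be paid for out of that variation, giving $f \le 1+2K$. Your argument is local and more elementary: since $f \ge 0$ integrates to $1$, every interval of length $2$ contains a point $x_1$ with $f(x_1) \le 1$, and the Lipschitz bound $|f(x_0)-f(x_1)| \le K\,|x_0-x_1| \le 2K$ finishes immediately. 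What your route buys is self-containedness: you need only the mean value theorem and monotonicity of the integral, whereas the paper's proof is terse about summing variations over level sets (which need not be intervals) and leaves implicit the final step converting the $2K$ variation bound into the pointwise bound. Your closing remark is also correct: integrating the triangular minorant $f(x) \ge M - K\,|x-x_0|$ gives $1 \ge M^2/K$, hence the strictly sharper bound $f \le \sqrt{K}$, which shows the lemma's constant is far from optimal, though $1+2K$ is all that is needed for \condref{sc}.
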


\begin{proof}
  Define
  \begin{eqnarray*}
    A_- &:=& \{x: f(x) \leq 1\}\\
    A_n &:=& \{x: 2^n < f(x) \leq 2^{n+1}\} \quad(n=0,1,\ldots).
  \end{eqnarray*}
  Then $\R$ is the disjoint union of these sets.

  We have $1 \geq P(A_n) \geq 2^n \lambda(A_n)$, where $\lambda$ is
  Lebesgue measure.  So $\lambda(A_n) \leq 2^{-n}$.

  On $A_n$, the total variation of $f$ does not exceed
  $K\times\lambda(A_n) \leq K \times 2^{-n}$.  Hence the total
  variation outside $A_-$ is at most $K \times \sum_0^\infty 2^{-n} =
  2K$.
\end{proof}

\end{document}